\numberwithin{equation}{section}
\newtheorem{theorem}{Theorem}
\newtheorem{proposition}[theorem]{Proposition}%
\newtheorem{remark}{Remark}%
\newtheorem{notation}{Notation}
\newtheorem{orientation}{Orientation}%
\newtheorem{lemma}{Lemma}
\def\e{{\rm e}}
\def\ii{{\rm i}}
\def\vE{{\mathbf{E}}}
\def\vS{{\mathbf{S}}}
\def\vI{{\mathbf{I}}}
\def\vF{{\mathbf{F}}}
\def\vX{{\mathbf{X}}}
\def\vY{{\mathbf{Y}}}
\def\vZ{{\mathbf{Z}}}
\def\R{{\mathbb{R}}}
\def\S{{\mathbb{S}}}
\def\I{{\mathbb{I}}}
\def\Z{{\mathbb{Z}}}
\newcommand{\ds}{\displaystyle}
\def\p{\partial}
\def\n{\nabla}
\def\d{\hbox{d}}
\def\One{{\bf 1}}
\def\eq#1{\begin{equation}#1\end{equation}}
\def\eeqn#1{\begin{eqnarray*}#1\end{eqnarray*}}
\def\vx{{\bf x}}
\def\vom{{\bm\omega}}
\def\SS{{\mathbb{S}_2}}
\begin{document}

\title{\textbf{ 
Numerical Simulation of Polarized Light and Temperature with a Refractive Interface}}
\author{Olivier Pironneau\footnote{\emph{olivier.pironneau@sorbonne-universite.fr }, LJLL, Sorbonne Universit\'e, Paris, France.}}

\begin{frontmatter}
\begin{abstract}
In this article we propose a numerical algorithm to compute the intensity and polarization of a polychromatic electromagnetic radiation crossing a medium with graded refractive index and modeled by the Vector Radiative Refractive Transfer Equations (VRRTE). Special attention is given to the case where the refractive index has a discontinuity for which the Fresnel conditions are necessary. We assume that the only spatial variable of interest is the altitude (stratified medium). An algorithm based on iterations on the sources is shown to be monotone and convergent. Numerical examples are given with highly varying absorption coefficient $\kappa$ and Rayleigh scattering as in the Earth atmosphere.  To study the effect of $\texttt{CO}_2$ in the atmosphere $\kappa$ is changed in the frequency ranges where  $\texttt{CO}_2$ is absorbing.
\end{abstract}

\begin{keyword}
Radiative transfer, Polarization, Fresnel Conditions, Integro-differential equations, Numerical analysis, Climate modeling. 
MSC classification 3510, 35Q35, 35Q85, 80A21, 80M10
\end{keyword}
\end{frontmatter}

\tableofcontents

\section{Introduction}

Understanding and computing a polychromatic electromagnetic radiation crossing a medium with non constant refractive index and non constant absorption and scattering is a challenge for which the works of S. Chandrasekhar \cite{CHA} and G. Pomraning \cite{POM} are fundamental. Applications are numerous in astrophysics, nuclear engineering, atmospheric sciences \cite{DUF} and more recently image synthesis \cite{graphics}.

The medium is too large and the wavelengths too small to use Maxwell's equations, so the  Radiative Transfer Equation (RTE) is used. It is an integro partial differential system in 6 dimensions, 3 for space, 2 for ray directions, 1 for frequencies; so it is a serious numerical challenge. With his coauthors the present author proposed to analyze the coupling of RTE with the temperature equation to understand the effect of the numerous physical parameters on the temperature in the Earth atmosphere. An algorithm based on iterations on the source was shown to be monotone and convergent for stratified media in \cite{FGOP3} and in 3D for a general topography in \cite{JCP}.
The method was generalized to polarized light by using the vector RTE (VRTE) for stratified media \cite{OP2023}.  Then it was further generalized to VRRTE (the second R is for refractive) for media with non-constant but smooth refractive index \cite{OP2024}.
\\
There are numerous methods to solve numerically the VRTE, based on Fourier or Chebyshev expansion \cite{dehann}, \cite{wang}, discrete ordinate \cite{weng}, lattice Boltzmann \cite{zhang}, Finite Elements \cite{tan}, etc.  They aim at giving a complete description of the Stokes vector as a function of spatial and ray directional variables. Here we are interested in the temperature and this does not require to solve the full VRTE, it is enough to compute the directional average of the Stokes vector.  Solving the VRTE coupled with the temperature equation has not received much attention, to our knowledge.
\\\\
In this article the method proposed in \cite{FGOP3} is generalized to VRRTE with discontinuous graded refractive index for which the Fresnel conditions are necessary to match the Stokes vector on both sides of the discontinuity.  We have used a formulation of Fresnel's conditions given by A. Garcia \cite{garcia} which is well adapted to the VRRTE.

While Fresnel's conditions are natural jump conservations for the Maxwell equations, they are not natural to the VRRTE.  Hence, we had to include them in the integral semi-analytic solution of the VRRTE rather than in the partial differential equations (see sections 5 and 6).

By coupling the VRRTE with the temperature equation the problem becomes nonlinear.  Iterations on the sources is a very simple idea in which the equations are solved with  given right hand side (the sources) and then the sources are updated with the new solution. The temperature equation is solved with Newton iterations as explained in \cite{FGOP3}.  We prove here that the sequences are monotonous and that the solution can be approached from above and below, at least when the jump in the refractive index is not too large.

The method is tested numerically on two set of cases, one in which the light comes from the Sun and the other where the (infrared) light comes from Earth.  These waves cross a medium which has a layer of large refractive index (like the sea) and the atmosphere above it which has a refractive index close to unity.

We intend to generalize the method to non stratified atmosphere  as in \cite{JCP}.

\section{Fundamental Equations}

Light in a medium $\Omega$  is an electromagnetic radiation satisfying Maxwell's equations.
The electric field ${\bf E}={\bf E_0}\exp(\ii({\bf k\cdot\vx} - \nu t))$ of a monochromatic plane wave of frequency $\nu$ propagating in direction ${\bf k}$,  is a solution to Maxwell equations which is suitable to describe the propagation of a ray for which $\nu$ is very large.

Such radiations are characterized either by ${\bf k}$ and $\vE_0$ or equivalently \cite{CHA}\cite{garcia} by their Stokes vectors $\vI=[I,Q,U,V]^T$,  made of  the irradiance $I$ and 3 functions $Q,U,V$ to define its state of polarization. 

The  radiation comes from the boundary but also from a the Planck law, a distributed source $\vF=[\kappa_a B_\nu(T),0,0,0]^T$  for an unpolarized-emitting black-body (for example due to the black-body radiation of air or water). It  is defined in terms of the rescaled  Planck function $B_\nu(T)=\nu^3(\e^\frac\nu T - 1)^{-1}$, and the rescaled temperature $T$. The range of frequencies of interest is $\nu\in(0.01,20)\times 10^{14}$, hence a scaling is applied, detailed in \cite{FGOP3}: $\nu$ is divided by $10^{14}$ and $T_K$ in Kelvin is divided by 4798:
$
T= 10^{-14}\frac kh T_K = \frac{T_K}{4798}, 
$
where $k$ and $h$ are the Boltzmann and Planck constants.
The parameter $\kappa_a$ is related to absorption and scattering (see \eqref{def} below),   
which, by the way, are quantum effects, not described by Maxwell's equations. 

Absorption and scattering are modeled by a system of integro-differential partial differential equations, known as VRRTE (short for Vector Radiative Refractive Transfer Equations )\cite{POM} p152, \cite{BEN}. 

With $\tilde\vI:=\vI/n^2$,  
\begin{align}\label{fundamental}
\frac nc\partial_t\tilde\vI &+ \vom\n_\vx \tilde\vI + \frac{\n_\vx n}{n}\cdot\n_\vom\tilde\vI+\kappa_\nu\tilde\vI 
= \int_\SS\Z_\nu(\vx,\vom':\vom)\tilde\vI\d\omega' + \tilde\vF_\nu,
\end{align}
for all $\nu\in\R^+$, $\vx\in\Omega,\vom\in\SS$, where $c$ is the speed of light, $n$ the refractive index of the medium ,  $\SS$ the unit sphere, $\kappa$ the absorption  and $\Z$ the phase scattering matrix for rays $\vom'$ scattered in direction $\vom$ for each frequency $\nu$.  
It is assumed that  $n$ depends continuously on position $\vx\in\Omega$ except on surfaces of discontinuities on which additional jump conditions will be applied (Fresnel's conditions); $\kappa$ depends  on $\vx$ and strongly on $\nu$. VRRTE \eqref{fundamental} does not hold at interfaces of strong discontinuities of $n$ where the transmission, reflection and refraction are subject to Fresnel's conditions. 

Because $c$ is very large, the term $\frac1c\partial_t\vI$ is neglected. 
The thermal conductivity is also small so that ``Thermal Equilibrium'' is assumed:
\begin{equation}
\label{thermal}
\n_\vx\cdot\int_{\R_+}\int_{\S_2}\tilde I\vom\d\omega\d\nu=0.
\end{equation}
\begin{notation}
On all variables, the tilde indicates a division by $n^2$. Arguments of functions are sometimes written as indices like $\kappa_\nu$ and $n_z$.
\end{notation}
Following \cite{Liu}, given a cartesian frame ${\bf i},{\bf j},{\bf k}$,  the third term on the left in \eqref{fundamental} is computed in polar coordinates, with 
\[
\vom := {\bf i}\sin\theta\cos\varphi+{\bf j}\sin\theta\sin\varphi+{\bf k}\cos\theta,
\quad{\bf s}_1:=-{\bf i}\sin\varphi+{\bf j}\cos\varphi,
\]
\begin{align*}
\n_\vx\log{n}\cdot\n_\vom\tilde\vI = \frac{1}{\sin \theta} \frac{\partial}{\partial \theta}\left\{\tilde\vI\cdot(\cos\theta \vom-\boldsymbol{k}) \cdot \nabla_\vx\log n\right\} 
+\frac{1}{\sin \theta} \frac{\partial}{\partial \varphi}\left\{\tilde\vI~\boldsymbol{s}_1 \cdot \nabla_\vx\log n\right\}.
\end{align*}
When $n$ does not depend on $x,y$ but only on $z$, and nothing depends on $\varphi$, it simplifies to
\begin{equation*}
\n_\vx\log{n}\cdot\n_\vom\tilde\vI = (\partial_z\log n)\cdot \partial_\mu\left\{(1-\mu^2)\tilde\vI\right\}
\quad\text{ where }\mu=\cos\theta.
\end{equation*}

\section{The Stratified Case}

For an atmosphere of thickness $Z$ over a flat ground, the spatial domain is $\Omega=\R^2\times(0,Z)$, but if  all variables are independent of $x,y$ it reduces to $(0,Z)$.
In that case, in \cite{CHA} p40-53, expressions for the phase matrix $\Z$ are given for Rayleigh and isotropic scattering for  
$[I,Q]^T$,
\begin{align*}&\ds
 \Z_R = \frac32\left[\begin{matrix}
2(1-\mu^2)(1-\mu'^2)+\mu^2\mu'^2 & \mu^2 \cr
\mu'^2 & 1\cr
\end{matrix}\right],
\quad
\Z_I = 
\frac12\left[\begin{matrix}
1&1 \cr
1&1\cr
\end{matrix}\right]
\end{align*}
For a given $\beta\in[0,1]$, we shall consider a combination of $\beta\Z_R$  (Rayleigh scattering) plus $(1-\beta)\Z_I$ (isotropic scatterings) as in \cite{CHA},\cite{POM},\cite{POM2}. 

The two other components of the Stokes vectors have autonomous equations,
\begin{align}\label{U}
\mu\p_z \tilde U + \partial_z\log n\cdot\partial_\mu\{(1-\mu^2)\tilde U\}
+\kappa\tilde U = 0, 
\\ \label{V}
\mu\p_z \tilde V + \partial_z\log n\cdot\partial_\mu\{(1-\mu^2)\tilde V\}
+\kappa\tilde V = \frac\mu{2}\int_{-1}^1 \mu'\tilde V(z,\mu')\d\mu'.
\end{align}
\begin{notation}
Denote the scattering coefficient $a_s\in[0,1)$, which, as $\kappa$, is a function of altitude $z$ and frequency $\nu$. Define  
\eq{\label{def}
\kappa_s =\kappa a_s, \qquad \kappa_a=\kappa-\kappa_s = \kappa (1-a_s).
}
\end{notation}
From \eqref{U},\eqref{V} we see that, if the light source at the boundary is unpolarized then  $U=V=0$ and the light can be described either by $I$ and $Q$ or two orthogonal components $I_l,I_r$, such that  $I=I_l+I_r$ and $Q=I_l-I_r$ (see  \cite{CHA}):
 \begin{equation}
\left\{\begin{aligned}\label{lllreq}&
 \mu\p_z  {\tilde I}_l + \partial_z\log n\cdot\partial_\mu\{(1-\mu^2)\tilde I_l\}+\kappa {\tilde I}_l 
 \cr&
 \hskip 2cm= \frac{3\beta\kappa_s}8 \int_{-1}^1([2(1-\mu'^2)(1-\mu^2)+\mu'^2\mu^2]{\tilde I}_l + \mu^2 {\tilde I}_r)\d\mu'
\cr&
\hskip 2cm + \frac{(1-\beta)\kappa_s}4\int_{-1}^1[{\tilde I}_l+ {\tilde I}_r]\d\mu'  +  \frac{\kappa_a}2 \tilde B_\nu(T(z)), 
 \cr&
 \mu\p_z  {\tilde I}_r + \partial_z\log n\cdot\partial_\mu\{(1-\mu^2)\tilde I_r\}+\kappa {\tilde I}_r =\frac{3\beta\kappa_s}8 \int_{-1}^1(\mu'^2 {\tilde I}_l + {\tilde I}_r)\d\mu' 
 \cr&
 \hskip 2cm  
+ \frac{(1-\beta)\kappa_s}4\int_{-1}^1[{\tilde I}_l+ {\tilde I}_r]\d\mu'  +  \frac{\kappa_a}2 \tilde B_\nu(T(z)), 
\end{aligned}\right.
\end{equation}

Using the above linear combination on \eqref{lllreq}, the system for $\tilde I$ and $\tilde Q$ is derived,
\begin{equation}
\left\{\begin{aligned}\label{lq}\ds &
\mu \p_z\tilde I + \partial_z\log n\cdot\partial_\mu\{(1-\mu^2)\tilde I\}+ \kappa\tilde I 
\cr&
\hskip 2cm=\kappa_a \tilde B_\nu + \frac{\kappa_s}2\int_{-1}^1 \tilde I\d\mu'
+ \frac{\beta\kappa_s}4 P_2(\mu)\int_{-1}^1 [P_2\tilde I-(1-P_2 )\tilde Q]\d\mu',
\cr&
\mu \p_z \tilde Q + \partial_z\log n\cdot\partial_\mu\{(1-\mu^2)\tilde Q\} + \kappa\tilde Q 
\cr&
\hskip 2cm= -\frac{\beta\kappa_s}4 (1-P_2(\mu))\int_{-1}^1 [P_2\tilde I-(1-P_2 )\tilde Q]\d\mu',
\end{aligned}\right.
\end{equation}
where  $P_2(\mu)=\tfrac12(3\mu^2-1)$.
The temperature $T(z)$ is linked to $I$ by \eqref{thermal} which, in the case of \eqref{lq} is as follows.
\begin{lemma}
Thermal equilibrium for \eqref{lllreq} or \eqref{lq} is
\eeqn{
   \int_{\R_+}\kappa_a\big[\tilde B_\nu(T)
 - \tfrac12\int_{-1}^1 \tilde I\d\mu\big]\d\nu=0.
}
\end{lemma}
\begin{proof}
Averaging in $\mu$ the first equation of \eqref{lq} leads to
\begin{align*}
\nabla_\vx\cdot\int_\SS \vom \tilde I = \partial_z(\frac12\int_{-1}^1\mu \tilde I\d\mu) &= 
-\frac12\partial_z\log n\cdot\int_{-1}^1\partial_\mu\{(1-\mu^2)\tilde I\}\d\mu
\cr&
- \frac12 \kappa\int_{-1}^1\tilde I \d\mu+\frac12\int_{-1}^1\kappa_a \tilde B_\nu\d\mu + \frac{\kappa_s}2\int_{-1}^1 \tilde I\d\mu',
\end{align*}
because $\int_{-1}^1P_2(\mu)\d\mu=0$. Now the first term on the right integrates to zero and $\kappa-\kappa_s=\kappa_a$.
\end{proof}
\begin{remark}
Note that if $n$ is discontinuous at $z=Y$ one expects $(1-\mu^2)[\tilde I,\tilde Q]^T$ constant in $\mu$ at $z=Y$. However the equations are not valid at $Y$ and if $n$ is constant before and after $Y$ with a jump at $Y$ it is not clear that a standard numerical method would see the term containing the Dirac mass $\partial_z\log n$.  On the other hand in the integral formulation that follows the characteristics change at $Y$ and so $\tilde \vI$ has a jump at $Y$ even without the Fresnel Conditions.
\end{remark}
\begin{orientation}
For the numerical simulations \eqref{lq} is more appropriate, but to derive energy estimates \eqref{lllreq} is better.  The differences are in the source terms and the boundary conditions but we can easily switch from one to the other. 
\end{orientation}

\section{A Stratified Medium with a Discontinuous Refractive Index}
Systems \eqref{lllreq} and \eqref{lq} are not valid across a discontinuity of $z\mapsto n(z)$, but the Fresnel Conditions give the needed jump conditions to patch the solutions.
Consider 3 parallel planes at $z=0$, $z=Y>0$  and $z=Z>Y$. The refractive index of the medium is $n^-$ when $z<Y$ and $n^+$ when $z>Y$. Denote, when the roots exist,
\begin{equation*}
 n _{\mp}=\frac{n_-}{n_+}, ~~  n _{\pm}=\frac{n_+}{n_-},
 \quad
 \eta(n) = \sqrt{1-n^2(1-\mu^2)},
 \quad 
 \mu_c(n)=\sqrt{1-\frac1{n^2}}.
\end{equation*}
%

Fresnel's refraction conditions are written in \cite{garcia} for $\vI$. Rewritten for $\tilde\vI$, they are,
\begin{align}\label{fresnel}
&\tilde\vI(Y^-,-\mu )=\mathbf{X}(n_{\mp}, \mu) \tilde\vI(Y^-, \mu )+\mathbf{Y}(n_{\mp}, \mu) \tilde\vI(Y^+,-\eta(n_{\mp}, \mu)) , \quad \mu\in(0,1), 
\cr&
\tilde\vI(Y^+, \mu )=\mathbf{X}(n_{\pm}, \mu) \tilde\vI(Y^+,-\mu )+\mathbf{Y}(n_{\pm}, \mu) \tilde\vI(Y^-, \eta(n_{\pm}, \mu) ), \quad \mu\in(0,1).~~ 
\end{align}
\begin{align*}
&\mathbf{X}(n, \mu)= \begin{cases}\mathbf{G}(n, \mu), & n \leq 1, \\
\mathbf{G}(n, \mu) H\left[\mu-\mu_c(n)\right]+\boldsymbol{\Gamma}(n, \mu)\left\{1-H\left[\mu-\mu_c(n)\right]\right\}, & n \geq 1,\end{cases}\\
&\mathbf{Y}(n, \mu)= \begin{cases}\mathbf{D}(n, \mu), & n \leq 1, \\ \mathbf{D}(n, \mu) H\left[\mu-\mu_c(n)\right], & n \geq 1.
\end{cases}
\end{align*}
Here $\tilde\vI\in\R^4$, $\vX,\vY$ are $4\times 4$ matrices given in terms of $H$, the Heaviside function  and 3 matrices ${\bf G, D, \Gamma}$, for which the non-zero terms  are ,
$$
\begin{gathered}
 \Gamma_{11}=\Gamma_{22}=1,\\
G_{11} =G_{22} =\frac{1}{2}\left\{\left[\frac{\mu-n \eta(n, \mu)}{\mu+n \eta(n, \mu)}\right]^2+\left[\frac{n \mu-\eta(n, \mu)}{n \mu+\eta(n, \mu)}\right]^2\right\} \\
G_{12} =G_{21} =\frac{1}{2}\left\{\left[\frac{\mu-n \eta(n, \mu)}{\mu+n \eta(n, \mu)}\right]^2-\left[\frac{n \mu-\eta(n, \mu)}{n \mu+\eta(n, \mu)}\right]^2\right\} \\
D_{11} =D_{22} =2 n \mu \eta(n, \mu)\left\{\frac{1}{[\mu+n \eta(n, \mu)]^2}+\frac{1}{[n \mu+\eta(n, \mu)]^2}\right\} \\
D_{12} =D_{21} =2 n \mu \eta(n, \mu)\left\{\frac{1}{[\mu+n \eta(n, \mu)]^2}-\frac{1}{[n \mu+\eta(n, \mu)]^2}\right\}
\end{gathered}
$$
\begin{align*}
\Gamma_{33}=\Gamma_{44} =1- \frac{2(1-\mu^2)^2}{1-(1+n^{-2})\mu^2},
\quad
\Gamma_{43}=-\Gamma_{34}=\frac{2\mu(1-\mu^2)(\mu_c^2-\mu^2)^\frac12}{1-(1+n^{-2})\mu^2}
\\
G_{33}=G_{44}=\left(\frac{\mu-n \eta(n, \mu)}{\mu+n \eta(n, \mu)}\right)\left(\frac{n\mu- \eta(n, \mu)}{n\mu+ \eta(n, \mu)}\right),
\\
D_{33}=D_{44}=\frac{4 n \mu \eta(n)}{(n \eta(n)+\mu)(n\mu+ \eta(n))}.
\end{align*}

\begin{remark}\label{rem1}
 Notice that
\begin{itemize}
\item System \eqref{fresnel} is compatible with a Stokes vector like $\tilde\vI=[I,Q,0,0]^T$: the last 2 components of $\tilde\vI$ on the left and right side of the equations can be zero. Therefore, when the polarization is with $U=V=0$, we can work with $\tilde\vI=[\tilde I,\tilde Q]^T$ and the $2\times 2$ matrices obtained from the left upper part of the full matrices.
\item  Notice that if $n\equiv 1$ then $\vX=0$ and $\vY={\bf 1}$.
\item Finally, notice  that the eigenvalues of the $2\times 2$ matrices $\vX$ and $\vY$ are real and less or equal to $1$.
\end{itemize}
\end{remark}
\begin{proof}
\[
\vX=\left(\begin{matrix}a+b & a-b \cr a-b & a+b \end{matrix}\right)
\quad%
\hbox{with $a=\frac{1}{2}\left[\frac{\mu-n \eta(n, \mu)}{\mu+n \eta(n, \mu)}\right]^2$,  $b=\frac12\left[\frac{n \mu-\eta(n, \mu)}{n \mu+\eta(n, \mu)}\right]^2$}
\]
The eigenvalues $\lambda$ are solutions of 
\[
\lambda^2 -2\lambda(a+b)+ 4ab=0\quad \Rightarrow~\lambda_1=2a,~~  \lambda_2=2b.
\]
It is similar for $\vY$ with $a=\frac{2 n \mu \eta(n, \mu)}{[\mu+n \eta(n, \mu)]^2}\le \frac 12$ and $b=\frac{2 n \mu \eta(n, \mu)}{[n\mu+ \eta(n, \mu)]^2}\le \frac 12$.
\end{proof}
\begin{remark}\label{rem2}
The Fresnel conditions written for $\tilde \vI=[\tilde I_l,\tilde I_r]^T$ have  $\vX$ and $\vY$ given by the same formulae but with the matrices changed to
\[
\vX':=\left(\begin{matrix} \vX_{11}+\vX_{12} &0\cr 0& \vX_{11}-\vX_{12} \end{matrix}\right), \quad \vY':=\left(\begin{matrix} \vY_{11}+\vY_{12} &0\cr 0&  \vY_{11}-\vY_{12} \end{matrix}\right).
\]
\end{remark}

\begin{notation}
From now on the tildes are dropped and $\vI$ etc are understood as $\tilde \vI$ etc.
\end{notation}

\section{Stratified VRRTE with Graded Index and Planar Discontinuity}
Consider the partition $(0,Z)=(0,Y]\cup[Y,Z)$. Assume that the refractive index is $n_z=n^-(z)$ in $(0,Y)$ and $n_z=n^+(z)$ in $(Y,Z)$. We assume that $\{n^+,n^-\}$ are smooth functions of  $z$.
The convective part of the vector radiative transfer equations for $\vI=[I,Q]^T$ is (tildes are dropped),
\begin{align}\label{vrte}
\mu\partial_z\vI +  \partial_z\log n\cdot\partial_\mu\{(1-\mu^2)\vI\} + \kappa(z)\vI = \vS,
\end{align}
with the source terms $\vS=[S_1(z,\mu),S_2(z,\mu)]^T$.
The characteristics of \eqref{vrte} divided by $\mu$ are defined by
\[
\dot \xi(s) = 1, \quad \dot\omega(s)=\partial_z\log n(z(s))\frac{1-\omega^2(s)}{\omega(s)}
\]
So the characteristic curve passing through $z$ and $\mu$ is
\[
z'\mapsto\omega(z')={\rm{sign}}(\mu)\sqrt{1-\left(\frac{n(z)}{n(z')}\right)^2(1-\mu^2)}.
\]
The solution of \eqref{vrte} is obtained by the method of characteristics, which can be adapted to the case of a discontinuity at $z=Y$,
\begin{align}\label{gensolpm}
&\vI(z,\mu)|_{z<Y} = \One_{\mu>0}\left[\e^{-\int_0^z\frac{\kappa(z')}{\omega(z')}d z'}\vI(0,\omega(0)) 
+ \int_0^z\frac{\e^{-\int^z_{z'} \frac{\kappa(z'')}{\omega(z'')}d z''}}{\omega(z')}{\vS(z',\omega(z'))}\d z'\right]
\cr&
+\One_{\mu<0}\left[
\e^{\int_z^Y\frac{\kappa(z')}{\omega(z')}d z'}\vI(Y^-,\omega(Y))
-\int_z^Y \frac{\e^{\int_{z}^{z'}\frac{\kappa(z'')}{\omega(z'')} d z'' }}{\omega(z')}{\vS(z',\omega(z'))}\d z'
\right]
\end{align}
\begin{align}\label{gensolpp}
&\vI(z,\mu)|_{z>Y} = \One_{\mu>0}\left[\e^{-\int_Y^z\frac{\kappa(z')}{\omega(z')}d z'}\vI(Y^+,\omega(Y)) 
+ \int_Y^z\frac{\e^{-\int^z_{z'} \frac{\kappa(z'')}{\omega(z'')}d z''}}{\omega(z')}{\vS(z',\omega(z')}\d z'\right]
\cr
&+\One_{\mu<0}\left[
\e^{\int_z^Z\frac{\kappa(z')}{\omega(z')}d z'}\vI(Z,\omega(Z))
-\int_z^Z \frac{\e^{\int_{z}^{z'}\frac{\kappa(z'')}{\omega(z'')} d z'' }}{\omega(z')}{\vS(z',-\omega(z'))}\d z'
\right]
\end{align}
\begin{notation}
~
Define, when possible,
\begin{equation*}
\fbox{$
\begin{array}{rcl}
&\eta(n):= \sqrt{1-n^2(1-\mu^2)}, \quad &
\ds \phi(z',z'')|_{z''\ge z'}= \exp\left\{{-\ds\int_{z'}^{z''}\frac{\kappa(y)}{\eta(\frac{n_z}{n_y})}d y}\right\},
\\
&\I(y)=\eta(\frac{n_z}{n_y})\vI(y,{\rm{sign}}(\mu)\eta(\frac{n_z}{n_y})),
\quad
&\S(y):=\vS(y,\eta(\frac{n_z}{n_y})) / \eta(\frac{n_z}{n_y}).
\end{array}
$}
\end{equation*}%

All are functions of $z$ and $\mu$ which are the reference point and direction to define the characteristic.
\end{notation}

\begin{lemma}
Assume that $\vS(z,-\mu)=\vS(z,\mu)$ for all $\mu$. If, for some function $\Delta(z,\mu)$,
\begin{equation}\label{deltaa}
 \I(Y^+)= \I(Y^-) +\Delta(z,\mu),
 \end{equation}
 then  the solution of \eqref{vrte} is
 \begin{align}\label{gensolppm}
\vI(z,\mu) &= \One_{\mu>0}\left[\phi(0,z)\I(0) 
+ \int_0^z\phi(z',z)\S(z')\d z'\right]
\cr&
+\One_{\mu<0}\left[
\phi(z,Z)\I(Z)
+\int_z^Z \phi(z,z')\S(z')\d z'
\right]
\cr&
+
[\One_{z>Y}\One_{\mu>0}\phi(Y,z)
-\One_{z<Y}\One_{\mu<0}\phi(z,Y)]\Delta(z,\mu).
\end{align}
\end{lemma}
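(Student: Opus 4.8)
The plan is to obtain \eqref{gensolppm} by patching the two ``one-sided'' characteristic representations \eqref{gensolpm} (valid for $z<Y$) and \eqref{gensolpp} (valid for $z>Y$) across the interface $z=Y$, the glue being the jump relation \eqref{deltaa}. Two elementary facts carry the bookkeeping: the semigroup identity $\phi(z_1,z_2)\,\phi(z_2,z_3)=\phi(z_1,z_3)$ for $z_1\le z_2\le z_3$, immediate from the definition of $\phi$, and additivity of integrals over adjacent intervals, $\int_a^b+\int_b^c=\int_a^c$.

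First I would record the structural fact that makes patching meaningful: characteristics concatenate across $Y$. Along the characteristic through $(z,\mu)$ the quantity $n(z')^{2}(1-\omega(z')^{2})$ is constant (differentiate, using $\dot\omega=\partial_{z'}\log n\,(1-\omega^2)/\omega$), so the curve through $(z,\mu)$ and its prolongation through $(Y,\omega(Y))$ describe one and the same curve; in particular the endpoint directions $\omega(0)$ and $\omega(Z)$ and every $\eta$- and $\phi$-factor attached to that curve are the same whether we anchor the curve at $(z,\mu)$ or at $(Y,\omega(Y))$. Hence the interface datum $\I(Y^{\mp})$ appearing as ``incoming boundary data'' in \eqref{gensolpm}/\eqref{gensolpp} is exactly the value produced by the representation valid on the \emph{other} side of $Y$, evaluated at the matching point of the very same characteristic.

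The proof is then a four-way case split on $\operatorname{sign}(\mu)$ and $\operatorname{sign}(z-Y)$. When $\mu>0,\ z<Y$ (resp.\ $\mu<0,\ z>Y$) the characteristic never meets $z=Y$, so \eqref{gensolpm} (resp.\ \eqref{gensolpp}) is already closed and coincides with \eqref{gensolppm} with the $\Delta$-term absent, consistent with the factor $\One_{z>Y}\One_{\mu>0}$ (resp.\ $\One_{z<Y}\One_{\mu<0}$) in front of $\Delta$. When $\mu>0,\ z>Y$: start from \eqref{gensolpp}, which expresses $\vI(z,\mu)$ through $\I(Y^{+})$ and an integral over $(Y,z)$; insert $\I(Y^{+})=\I(Y^{-})+\Delta(z,\mu)$ from \eqref{deltaa}; replace $\I(Y^{-})$ by the value that \eqref{gensolpm} attaches to the matching point of the same characteristic, namely $\phi(0,Y)\I(0)+\int_0^Y\phi(z',Y)\S(z')\,\d z'$; then $\phi(0,Y)\phi(Y,z)=\phi(0,z)$, $\phi(z',Y)\phi(Y,z)=\phi(z',z)$ and $\int_0^Y+\int_Y^z=\int_0^z$ collapse the boundary and source contributions to $\phi(0,z)\I(0)+\int_0^z\phi(z',z)\S(z')\,\d z'$, while the leftover is exactly $\phi(Y,z)\Delta(z,\mu)$. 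The case $\mu<0,\ z<Y$ is the mirror image: \eqref{gensolpm} gives $\vI(z,\mu)$ from $\I(Y^{-})$ and an integral over $(z,Y)$; substitute $\I(Y^{-})=\I(Y^{+})-\Delta(z,\mu)$, replace $\I(Y^{+})$ by $\phi(Y,Z)\I(Z)+\int_Y^Z\phi(Y,z')\S(z')\,\d z'$ from \eqref{gensolpp}, and merge to get $\phi(z,Z)\I(Z)+\int_z^Z\phi(z,z')\S(z')\,\d z'-\phi(z,Y)\Delta(z,\mu)$. Reassembling the four cases yields \eqref{gensolppm}.

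Throughout, the hypothesis $\vS(z,-\mu)=\vS(z,\mu)$ is what lets every source evaluation be written uniformly: along a characteristic the source enters as $\vS(z',\omega(z'))$ in the upstream integrals and as $\vS(z',-\omega(z'))$ in the downstream ones, and for $\mu<0$ one has $\omega(z')<0$; by the symmetry all of these equal $\vS(z',\eta(n_z/n_{z'}))$, which together with the factor $1/\omega(z')$ and the explicit $\mp$ sign in front of the source integrals in \eqref{gensolpm}/\eqref{gensolpp} produces precisely $+\S(z')$ in every integral of \eqref{gensolppm}. I expect the only genuine obstacle to be this sign-and-argument bookkeeping in the downward ($\mu<0$) directions, together with the verification that the interface value extracted from one representation is literally the incoming datum demanded by the other --- that is, the ``characteristics concatenate'' statement above, which reduces to the conservation of $n(z')^{2}(1-\omega(z')^{2})$; once those are settled, the rest is the semigroup identity and interval additivity.
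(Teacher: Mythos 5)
Your proposal is correct and follows essentially the same route as the paper: substitute the jump relation \eqref{deltaa} into the one-sided characteristic representations, replace the interface value $\I(Y^{\mp})$ by the expression the other representation assigns to it, and collapse via the semigroup identity $\phi(z_1,z_2)\phi(z_2,z_3)=\phi(z_1,z_3)$ and interval additivity, leaving exactly the $\pm\phi$-weighted $\Delta$ term. The extra remarks you make on the concatenation of characteristics (conservation of $n^2(1-\omega^2)$) and on the role of the source symmetry $\vS(z,-\mu)=\vS(z,\mu)$ are details the paper leaves implicit, but they do not change the argument.
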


\begin{proof}

With these notations \eqref{gensolpm} and \eqref{gensolpp} are
\begin{align}\label{gensolpms}
\vI(z,\mu)|_{z<Y} &= \One_{\mu>0}\left[\phi(0,z)\I(0) 
+ \int_0^z\phi(z',z){\S(z')}\d z'\right]
\cr&
+\One_{\mu<0}\left[
\phi(z,Y)\I(Y^-)
+\int_z^Y \phi(z,z'){\S(z')}\d z'
\right].
\end{align}
\begin{align}\label{gensolpps}
\vI(z,\mu)|_{z>Y} &= \One_{\mu>0}\left[\phi(Y,z)\I(Y^+) 
+ \int_Y^z\phi(z',z){\S(z')}\d z'\right]
\cr&
+\One_{\mu<0}\left[
\phi(z,Z)\I(Z)
+\int_z^Z \phi(z,z')\S(z')\d z'
\right].
\end{align}
Using \eqref{deltaa} in \eqref{gensolpms},
\begin{align*}
&\vI(z,\mu)|_{z<Y} = \One_{\mu>0}\left[\phi(0,z)\I(0) 
+ \int_0^z\phi(z',z)\S(z')\d z'\right]
\cr&
+\One_{\mu<0}\left[
\phi(z,Y)[(\I(Y^+)-\Delta(z,\mu)]
+\int_z^Y \phi(z,z')\S(z')\d z'
\right].
\end{align*}
Now by \eqref{gensolpps} used with $z=Y^+$, $\mu<0$,
\begin{align*}
\I(Y^+)|_{\mu<0} =
\phi(Y,Z)\I(Z)
+\int_Y^Z \phi(Y,z')\S(z')\d z'.
\end{align*}
Consequently,
\begin{align*}
&\vI(z,\mu)|_{z<Y} = \One_{\mu>0}\left[\phi(0,z)\I(0) 
+ \int_0^z\phi(z',z)\S(z')\d z'\right]
\cr&
+\One_{\mu<0}\left[
\phi(z,Y)[\phi(Y,Z)\I(Z)
+\int_Y^Z \phi(Y,z')\S(z')\d z'-\Delta(z,\mu)]
\right.\cr&\left.
+\int_z^Y \phi(z,z')\S(z')\d z'
\right] \qquad {\Rightarrow}
\cr
 &\vI(z,\mu)|_{z<Y} = \One_{\mu>0}\left[\phi(0,z)\I(0) 
+ \int_0^z\phi(z',z)\S(z')\d z'\right]
\cr&
~~~~~~~~~~~~+\One_{\mu<0}\left[ \phi(z,Z)\I(Z)
+\int_z^Z \phi(z,z')\S(z')\d z'
-\phi(z,Y)\Delta(z,\mu)
\right], 
\end{align*}
because $\phi(z,Y)\phi(Y,Z)=\phi(z,Z)$ and $\phi(z,Y)\phi(Y,z')=\phi(z,z')$ when $z<Y$ and $z'>Y$.
Now, if $z>Y$ we use \eqref{deltaa} in \eqref{gensolpps} 
\begin{align*}
&\vI(z,\mu)|_{z>Y} 
= \One_{\mu>0}\left[\phi(Y,z)[\I(Y^-) +\Delta(z,\mu)]
+ \int_Y^z\phi(z',z)\S(z')\d z'\right]
\cr&
+\One_{\mu<0}\left[
\phi(z,Z)\I(Z)
+\int_z^Z \phi(z,z')\S(z')\d z'
\right],
\end{align*}
and \eqref{gensolpms} at $z=Y$, with $\mu>0$,
\begin{align*}
\I(Y^-)|_{\mu>0}  = \phi(0,Y)\I(0) 
+ \int_0^Y\phi(z',Y,\mu)\S(z')\d z'.
\end{align*}
It shows that
\begin{align*}
&\vI(z,\mu)|_{z>Y} = \One_{\mu>0}\left[\phi(Y,z)[
\phi(0,Y)\I(0) 
 + \int_0^Y\phi(z',Y)\S(z')\d z'
 +\Delta(z,\mu)]
 \right.\cr&\left.
+ \int_Y^z\phi(z',z)\S(z')\d z'\right]
+\One_{\mu<0}\left[
\phi(z,Z)\I(Z)
+\int_z^Z \phi(z,z')\S(z')\d z'
\right]\quad {\Rightarrow}
\cr&
\vI(z,\mu)|_{z>Y}=\One_{\mu>0}\left[\phi(0,z)\I(0) 
 + \int_0^z\phi(z',z)\S(z')\d z'
+\phi(Y,z)\Delta(z,\mu)
\right] 
\cr&
~~~~~~~~~ +\One_{\mu<0}\left[
\phi(z,Z)\I(Z)
+\int_z^Z \phi(z,z')\S(z')\d z'
\right].
\end{align*}
Therefore, the additional term due to the discontinuity is
\[
[\One_{z>Y}\One_{\mu>0}\phi(Y,z)
-\One_{z<Y}\One_{\mu<0}\phi(z,Y)]\Delta(z,\mu).
\]
\end{proof}

\section{Application to Fresnel's Conditions}
\begin{notation}
Define, when possible,
\begin{equation*}
\fbox{$
\begin{array}{rcl}
&\eta_\pm(n):=(1-n_\pm^2(1-\eta(n)^2)^\frac12   ,
 \quad &
 \phi_\pm(z',z''):=\exp\left\{\ds-\int_{z'}^{z''}\frac{\kappa(y)}{\eta_\pm(\frac{n_z}{n_y})}\d y\right\}
\\
&\I_\pm(y)=\vI(y,{\rm{sign}}(\mu)\eta_\pm(\frac{n_z}{n_y}))
\quad
&\S_\pm(y)= \vS(y,\eta_\pm(\frac{n_z}{n_y}))/\eta_\pm(\frac{n_z}{n_y})
\end{array}
$}
\end{equation*}%
and similarly with $\mp$ for $\eta_\mp$ etc.  Let
\[
\vX_\pm:=\vX(n_\pm,|\mu|),~~\vX_\mp:=\vX(n_\mp,|\mu|)
\text{ and similarly with $\vY$}.
\]
\end{notation}
\subsection{Computation of $\Delta(z,\mu)$}
 From \eqref{gensolpps} and \eqref{gensolpms} we obtain
 \begin{align*}&
 \I(Y^+)|_{\mu<0}= \phi(Y,Z)\I(Z) +\int_Y^Z\phi(Y,z')\S(z')\d z',
 \cr&
 \I(Y^-)|_{\mu>0}= \phi(0,Y)\I(0) + \int_0^Y\phi(z',Y)\S(z')\d z'.
\end{align*}

And \eqref{gensolpms} and \eqref{gensolpps}  plugged in the Fresnel conditions \eqref{fresnel} yield,
 \begin{align*}
 \I(Y^-)|_{\mu<0}&=  \vX_\mp \left[\phi(0,Y)\I(0,-\mu)+\int_0^Y\phi(z',Y)\S(z')\d z'\right]
 \cr&
 + \vY_\mp \left[ \phi_\mp(Y,Z,)\I_\mp(Z)
  +\int_Y^Z \phi_\mp(Y,z')\S_\mp(z')\d z'\right],
\cr
\I(Y^+)|_{\mu>0} &=  \vX_\pm \left[ \phi(Y,Z)\I(Z,-\mu)+\int_Y^Z\phi(Y,z')\S(z')\d z'\right]
 \cr&
 + \vY_\pm \left[\phi_\pm(0,Y)\I_\pm(0)
  +\int_0^Y \phi_\pm(z',Y)\S_\pm(z')\d z'\right].
\end{align*}
We have added the dependency on $\mu$ on some of the functions because of the change of sign requested by the Fresnel conditions.

Therefore,
 \begin{align}\label{delta}
 \Delta(z,\mu)|_{\mu>0}&=\I(Y^+)|_{\mu>0}-\I(Y^-)|_{\mu>0} 
 \cr&
 = \vX_\pm \left[ \phi(Y,Z)\I(Z,-\mu)+\int_Y^Z\phi(Y,z')\S(z')\d z'\right]
 \cr&
 + \vY_\pm \left[\phi_\pm(0,Y)\I_\pm(0)
  +\int_0^Y \phi_\pm(z',Y)\S_\pm(z')\d z'\right] 
  \cr&
-\phi(0,Y)\I(0) - \int_0^Y\phi(z',Y)\S(z')\d z',
\cr
 \Delta(z,\mu)|_{\mu<0}&=\I(Y^+)|_{\mu<0}-\I(Y^-)|_{\mu<0} 
 \cr&
= \phi(Y,Z)\I(Z) +\int_Y^Z\phi(Y,z')\S(z')\d z'
\cr&
 -\vX_\mp \left[\phi(0,Y)\I(0,-\mu)+\int_0^Y\phi(z',Y)\S(z')\d z'\right]
 \cr&
 - \vY_\mp \left[ \phi_\mp(Y,Z,)\I_\mp(Z)
  +\int_Y^Z \phi_\mp(Y,z')\S_\mp(z')\d z'\right].
 \end{align}
\section{Iterations on the sources, the numerical scheme}
Consider
 \begin{align}\label{lqn}
{\vI}^{n+1}(z,\mu) &= \One_{\mu>0}\left[\phi(0,z)\I(0) 
+ \int_0^z\phi(z',z){\S}^n(z')\d z'\right]
\cr&
+\One_{\mu<0}\left[
\phi(z,Z)\I(Z)
+\int_z^Z \phi(z,z'){\S}^n(z')\d z'
\right]
\cr&
+
[\One_{z>Y}\One_{\mu>0}\phi(Y,z)
-\One_{z<Y}\One_{\mu<0}\phi(z,Y)]\Delta^n(z,\mu),
\end{align}
where $\Delta^n$ is given in terms of ${\S}^n$ by \eqref{delta}. 

\subsection{Implementation}
To implement the iterative algorithm,  the only functions needed are $\int_{-1}^1\vI(z,\mu)\mu^k\d\mu, ~k=0,2$ and that means that we need to  compute (see below) 
\[
\int_{0}^1\mu^k\phi(Y,z)\Delta(z,\mu)\d\mu,~z>Y\hbox{ and }\int_{-1}^0\mu^k\phi(z,Y)\Delta(z,\mu)\d\mu,~z<Y.
\]
  Consequently, with $\vS=\vS_0+\mu^2\vS_2$,
\begin{align}\label{zalpha}
&\int_{-1}^1\mu^k[\Delta(z,\mu)[\phi(Y,z)\One_{z>Y}\One_{\mu>0}-\phi(z,Y)\One_{z<Y}\One_{\mu<0}]
\d\mu
\cr&
= {\bm\alpha}^k(z) + \sum_{i=0,2}\int_0^Z \vZ^{k,{i-1}}(z,z') \vS_i(z')\d z',
\end{align}
To speedup the computations, ${\bm\alpha}$ and $\vZ$ are tabulated before hand.

The $2\times 2$ matrices $\vZ^{k,i}$, $i,k=0,2$, are
\begin{align}\label{ZZ}
\vZ^{k,i}(z,z') 
		&:= \int_{0}^1\mu^k \left(
\One_{z>Y}\phi(Y,z)\left\{\vX_\pm {\phi(Y,z')}\eta^i(\frac{n_z}{n_{z'}})\One_{z'>Y}
\right.\right.\cr&\left.\left.
+[\vY_\pm {\phi_\pm(z',Y)}{\eta_\pm^i(\frac{n_z}{n_{z'}})}
-\One\phi(z',Y)\eta^i(\frac{n_z}{n_{z'}})]\One_{z'<Y}
\right\}
\right.\cr&\left.
 +\One_{z<Y}  \phi(z,Y)
 \left\{
 	\vX_\mp {\phi(z',Y)}\eta^i(\frac{n_z}{n_{z'}})\One_{z'<Y}
	\right.\right.\cr&\left.\left. 
		+[\vY_\mp \phi_\mp(Y,z')\eta_\mp^i(\frac{n_z}{n_{z'}})
		-\One\phi(Y,z')\eta^i(\frac{n_z}{n_{z'}})]
	\One_{z'>Y}
  \right\}
  \right)\d\mu
\end{align}
and the vectors, $k=0,2$,
\begin{align*}
{\bm\alpha}^k(z) =
 \One_{z>Y} \int_0^1\mu^k 
 \phi(Y,z)\left[
 \vX_\pm  \phi(Y,Z)\I(Z,-\mu)
  + \vY_\pm \phi_\pm(0,Y)\I_\pm(0)
 \right. \\ \left.
  - \phi(0,Y)\I(0)
  \right]\d\mu
 \\
 + 
   \One_{z<Y}\int_{0}^1\mu^k \phi(z,Y)\left[
 \vX_\mp \phi(0,Y)\I(0)
    +\vY_\mp  \phi_\mp(Y,Z)\I_\mp(Z) 
    \right. \\ \left.
-\phi(Y,Z)\I(Z,-\mu)\right]\d\mu
 \end{align*}
\begin{remark}
Whenever feasible it is computationally advantageous to separate in ${\bm\alpha}^k$ the part containing $\nu$ from the one containing $z$. 
\end{remark}
For our purpose  
\begin{align*}&
\vI(0,\mu)=[\mu c_E B_\nu(T_E),0]^T, 
\quad
\vI(Z,-\mu)=[\mu c_S B_\nu(T_S),0]^T
\cr&
~~\Rightarrow~~
{\bm\alpha}^k(z) = [\alpha^k_E(z) B_\nu(T_E) + \alpha^k_S(z) B_\nu(T_S),0]^T
\end{align*}
with
\begin{align*}
\alpha^k_E(z)& =  
c_E \int_0^1\mu^k\left[ 
 \One_{z>Y}\phi(Y,z)[{\vY_\pm}_{11} \phi_\pm(0,Y)\eta_\pm(\frac{n_z}{n_0})
  - \phi(0,Y)\eta(\frac{n_z}{n_0})]
\right. \\ & \left. 
~~~~~~~~~~~~ 
+ \One_{z<Y}\phi(z,Y)
 {\vX_\mp}_{11} \phi(0,Y)\eta(\frac{n_z}{n_{0}})\right]\d\mu
    \cr
\alpha^k_S(z) &=
c_S \int_0^1\mu^k 
\left[
\One_{z>Y} \phi(Y,z)
 {\vX_\pm }_{11} \phi(Y,Z)\eta(\frac{n_z}{n_{Z}})
\right. 
\\ &
 \left.
 ~~~~~~~~~~ ~~ 
 +\One_{z<Y}\phi(z,Y)
    [{\vY_\mp}_{11}  \phi_\mp(Y,Z)\eta_\mp(\frac{n_z}{n_{Z}})
  -  \phi(Y,Z)\eta(\frac{n_z}{n_{Z}})
  ]\right]\d\mu
\end{align*}
\section{Integral representation of the Problem}
\subsection{System $[I,Q]^T$}
With the main purpose of computing the temperature, let us denote
\begin{equation*}
  J_k(z) = \tfrac12\int_{-1}^1 \mu^k  I\d\mu,
\quad
  K_k(z) = \tfrac12\int_{-1}^1 \mu^k  Q\d\mu .
\quad k=0,2.
\end{equation*}
Consider system \eqref{lq} for the irradiance $I$ and the polarization $Q$,
\begin{align*}\ds 
\mu \p_z   I +\partial_z\log n\cdot\partial_\mu\{(1-\mu^2)  I\}&+ \kappa   I =\kappa_a   B_\nu + \kappa_s   J_0&
\cr &
+ \frac{\beta\kappa_s}4 P_2(\mu)(3   J_2 -   J_0 
-3   K_0 + 3   K_2),
\\ 
\mu \p_z   Q + \partial_z\log n\cdot \partial_\mu\{(1-\mu^2)  Q\}&+ \kappa   Q = 
\cr &-\frac{\beta\kappa_s}4 (1-P_2(\mu))(3  J_2-  J_0 - 3   K_0 +3   K_2),
\end{align*}
where  $P_2(\mu)=\tfrac12(3\mu^2-1)$.   
Hence
\begin{align}\label{SS}
\vS(z,\mu)=& [S_1,S_2]^T=[S^0_1+\mu^2 S_1^2,
 S_2^0+\mu^2 {S_2}^2]^T\hbox{ with }
\cr S_1= &\kappa_a   B_\nu + \kappa_s   J_0
+ \frac{\beta\kappa_s}4 P_2(\mu)(3   J_2 -   J_0 
-3   K_0 + 3   K_2),
\cr
 S_2=& -\frac{\beta\kappa_s}4 (1-P_2(\mu))(3  J_2-  J_0 - 3   K_0 +3   K_2),
 ,\quad \Rightarrow
 \cr
 &S_1^0=\kappa_a   B_\nu + \kappa_s   J_0- \frac13 H,
 \quad
 S_1^2= H,
  \quad
 S_2^0=\frac13 H,
 \quad
 S_2^2=-H,
 \cr &
 \text{with } 
 H=\frac{9\beta\kappa_s}8(   J_2 -   \frac13J_0 -   K_0 +    K_2).
\end{align}
Define
\[
\psi(z,z')=\One_{z>z'}\phi(z',z)+\One_{z<z'}\phi(z,z').
\]
Let \eqref{gensolppm} be multiplied  by $\mu^k$ and integrated. 
Then, by \eqref{zalpha},
\begin{align}\label{J0}\ds
 J_k(z) =  
\frac12\int_0^Z\left[\left(\int_0^1\mu^k\psi(z,z')\d\mu+\vZ^{k,0}_{11}(z,z')\right)S_1^0(z')
\right.\cr\left.
+\vZ^{k,0}_{12}(z,z')S_2^0(z')\right]\d z'
\cr
+\frac12\int_0^Z\left[\left(\int_0^1\mu^k\psi(z,z')\eta^2(\frac{n_z}{n_z'})\d\mu+\vZ^{k,2}_{11}(z,z')\right)S_1^2(z')
\right.\cr\left.
+\vZ^{k,2}_{12}(z,z')S_2^2(z')\right]\d z'
+\frac12 { \alpha_1^k}(z)
\end{align}
Similarly (recall that $  Q$ is zero at $z=0,\mu>0$ and $z=Z,\mu<0$),
\begin{align*}
  K_k(z) =
\frac12\int_0^Z\left[\left(\int_0^1\mu^k\psi(z,z')\d\mu+\vZ_{22}^{k,0}\right) S_2^0(z')+\vZ^{k,0}_{21}S_1^0(z'))
\right.\cr\left.
+\left(\int_0^1\mu^k\psi(z,z')\eta^2(\frac{n_z}{n_z'})\d\mu + \vZ_{22}^{k,2}\right)S^2_2(z')
\right.\cr\left.
+\vZ^{k,2}_{21}S_1^2(z'))\right]\d z' +\frac12\alpha_2^k(z).
\end{align*}

\subsection{System $[I_l,I_r]^T$}
Now define
\begin{equation}\label{JK}
  J'_k(z) = \tfrac12\int_{-1}^1 \mu^k  I_l\d\mu,
\quad
  K'_k(z) = \tfrac12\int_{-1}^1 \mu^k  I_r\d\mu .
\quad k=0,2.
\end{equation}
and consider \eqref{lllreq} 
\begin{equation}
\left\{\begin{aligned}\label{lllreq2}&
 \mu\p_z  { I}_l + \partial_z\log n\cdot\partial_\mu\{(1-\mu^2) I_l\}+\kappa { I}_l 
 \cr&
 \hskip 2cm= \frac{3\beta\kappa_s}8[2(1-\mu^2)(J'_0-J'_2) + J'_2\mu^2 + \mu^2 K'_0]
 \cr&
 \hskip 2cm +\frac{(1-\beta)\kappa_s}4[J'_0+K'_0]  +  \frac{\kappa_a}2 \tilde B_\nu(T(z)), 
 \cr&
 \mu\p_z  { I}_r + \partial_z\log n\cdot\partial_\mu\{(1-\mu^2) I_r\}+\kappa { I}_r =\frac{3\beta\kappa_s}8 [J'_2+K'_0] 
 \cr&
 \hskip 2cm
 +\frac{(1-\beta)\kappa_s}4[J'_0+K'_0] +  \frac{\kappa_a}2 \tilde B_\nu(T(z)).
\end{aligned}\right.
\end{equation}
With the same notation as for $[I,Q]^T$,
\begin{align}\label{sourcep}&
\vS'=[S'_1,S'_2]^T= [{S'}_1^0+\mu^2{S'}_1^2,{S'}_2^0+\mu^2{S'}_2^2]^T,
\cr&
{S'}_1^0 = \frac{3\beta\kappa_s}8[2J'_0-2J'_2]+ \frac{(1-\beta)\kappa_s}4[J'_0+K'_0]
+\frac{\kappa_a}2 \tilde B_\nu(T(z)),
\cr&
{S'}_1^2 = \frac{3\beta\kappa_s}8[3J'_2-2J'_0  + K'_0],
\quad
{S'}_2^2=0,
\cr&
{S'}_2^0 = \frac{3\beta\kappa_s}8[J'_2+K'_0 ] + \frac{(1-\beta)\kappa_s}4[J'_0+K'_0]
+ \frac{\kappa_a}2 \tilde B_\nu(T(z)).
\end{align}
as for $[I,Q]^T$, we have
\begin{align}\label{JKp}\ds
 J'_k(z) &=  
\frac12\int_0^Z\left(\int_0^1\mu^k\psi(z,z')\d\mu+{\vZ'}^{k,0}_{11}(z,z')\right){S'}_1^0(z')
\d z'
+\frac12 { {\alpha'}_1^k}(z)
\cr&
+\frac12\int_0^Z\left(\int_0^1\mu^k\psi(z,z')\eta^2(\frac{n_z}{n_z'})\d\mu+{\vZ'}^{k,2}_{11}(z,z')\right){S'}_1^2(z')
\d z',
\cr
  {K'}_k(z) &=
\frac12\int_0^Z\left(\int_0^1\mu^k\psi(z,z')\d\mu+{\vZ'}_{22}^{k,0}\right) S_2^0(z')\d z
 +\frac12{\alpha'}_2^k(z)
 \cr&
+\frac12\int_0^Z\left(\int_0^1\mu^k\psi(z,z')\eta^2(\frac{n_z}{n_z'})\d\mu 
+ {\vZ'}_{22}^{k,2}\right){S'}^2_2(z')
\d z'.
\end{align}

The same expressions \eqref{zalpha},\eqref{ZZ} hold for ${\bm\alpha'}$ and $\vZ'$ with $\vX'$ and $\vY'$ instead of $\vX,\vY$ as in Remark \ref{rem2}.

\begin{remark}
Note that 
\[
2J'_0-2 J'_2=\int_{-1}^1(1-{\mu'}^2 )I_l\d\mu'\ge 0,
\quad 
3J'_0-2 J'_2=\int_{-1}^1(3-2{\mu'}^2 )I_l\d\mu'\ge 0.
\]
Therefore $\vS'$ is always non negative.
\end{remark}

\section{Convergence of the Iterations on the Sources}
\subsection{Algorithm}
Just as before for $\vI=[I,Q]^T$, the iterations on $\vI'=[I_l,I_r]^T$ are 
 \begin{align}\label{lllr}
{\vI'}^{n+1}(z,\mu) &= \One_{\mu>0}\left[\phi(0,z)\I(0) 
+ \int_0^z\phi(z',z){\S'}^n(z')\d z'\right]
\cr&
+\One_{\mu<0}\left[
\phi(z,Z)\I(Z)
+\int_z^Z \phi(z,z'){\S'}^n(z')\d z'
\right]
\cr&
+
[\One_{z>Y}\One_{\mu>0}\phi(Y,z)
-\One_{z<Y}\One_{\mu<0}\phi(z,Y)]\Delta^n(z,\mu),
\end{align}
where $\Delta^n$ is given in terms of ${\S'}^n$ by \eqref{delta}. 
To update $\S'(z')={\vS'(z')}/\eta(\frac{n_z}{n_{z'}})$ we use first \eqref{JK} with ${\vI'}^{n+1}$ to compute ${J'}^{n+1}_k,{K'}^{n+1}_k$, then compute $T^{n+1}$ by solving
\begin{align}\label{getT}
\int_{\R_+}\kappa_a   B_\nu(T^{n+1})\d\nu &= \int_{\R_+}\kappa_a ({J'_0}^{n+1}+{K'_0}^{n+1})\d\nu,~~ \forall z\in(0,Z).
\end{align}
and finally obtain ${\vS'}^{n+1}$ by \eqref{sourcep}.

Note that a Newton method can be used to solve \eqref{getT}  and convergence is implied by the strict positivity of  $T\mapsto \partial_T B_\nu(T)$ and the boundedness and continuity of $\partial_{TT} B_\nu(T)$ in any interval $[T_m,T_M]$ containing the solution. However, as usual, one must not start too far from the solution and $\nu\in(\nu_m,\nu_M)$, $\nu_m>0$.
\\\\
\textbf{Important Observation}
\\
{\it Computing \eqref{JK} with ${\vI'}^{n+1}$  is equivalent to computing \eqref{JKp} with ${\vS'}^n$.  The second is very much cheaper numerically.  If, at some point $z,\mu$, $\vI'$ is desired then \eqref{lllr} is used but once the iteration process has converged.}

\subsection{Monotony}
In earlier studies, like \cite{FGOP3}, on simpler systems, convergence was shown by using the monotony of operators. Here too the same arguments are used, not on $[I,Q]^T$ but on $\vI'=[I_l,I_r]^T$.
If $z<Y<z'$ then $\phi(z,Y)\phi(Y,z')=\phi(z,z')$, so, \eqref{delta} leads to
\begin{align*}&
(\Delta^n(z,\mu)-\Delta^{n-1}(z,\mu))[\phi(Y,z)\One_{z>Y}\One_{\mu>0}-\phi(z,Y)\One_{z<Y}\One_{\mu<0}]
\cr&
= -\One_{z>Y}\One_{\mu>0}\int_0^Y\phi(z',z)({\S'}^n(z')-{\S'}^{n-1}(z'))\d z' 
\cr&
 -\One_{z<Y}\One_{\mu<0}\int_Y^Z\phi(z,z')({\S'}^n(z')-{\S'}^{n-1}(z'))\d z' 
\cr&
 + \int_0^Z{\cal L}(z,z',Y,\mu)[{\S'}^n(z')-{\S'}^{n-1}(z')]\d z',
\end{align*}
where ${\cal L}(z,z',Y,\mu)$, the terms linear in $\S$ containing $\vX'$ and $\vY'$, are non negative.
 Consequently,
\begin{align*}
{\vI'}^{n+1}(z,\mu) - {\vI'}^{n}(z,\mu) 
 &=
 \One_{\mu>0}
\left\{ \int_0^z{\phi(z',z)}({\S'}^n-{\S'}^{n-1})\d z'
\right.\cr&\left.
~~~~ -\One_{z>Y}\int_0^Y\phi(z',z)({\S'}^n-{\S'}^{n-1})\d z' 
\right\}\cr&
+\One_{\mu<0}
\left\{ \int_z^Z {\phi(z,z')}({\S'}^n-{\S'}^{n-1})\d z'
\right.\cr&\left.
 ~~~~ -\One_{z<Y}\int_Y^Z\phi(z,z')({\S'}^n-{\S'}^{n-1})\d z' 
\right\}\cr&
 + \int_0^Z{\cal L}(z,z',Y,\mu)[{\S'}^n-{\S'}^{n-1}]\d z'.
\end{align*}
Positivity of all multipliers of ${\S'}^n-{\S'}^{n-1}$ is now obvious because
\begin{align*}
\int_0^z{\phi(z',z)}({\S'}^n-{\S'}^{n-1})\d z'
-\One_{z>Y}\int_0^Y\phi(z',z)({\S'}^n-{\S'}^{n-1})\d z'
\\ = \int_0^z{\phi(z',z)}({\S'}^n-{\S'}^{n-1})\d z' \text{ if $z<Y$}
\\ = \int_Y^z{\phi(z',z)}({\S'}^n-{\S'}^{n-1})\d z' \text{ if $z>Y$},
\\
 \int_z^Z {\phi(z,z')}({\S'}^n-{\S'}^{n-1})\d z'
 -\One_{z<Y}\int_Y^Z\phi(z,z')({\S'}^n-{\S'}^{n-1})\d z'
\\ = \int_z^Z{\phi(z,z')}({\S'}^n-{\S'}^{n-1})\d z' \text{ if $z>Y$}
\\ = \int_z^Y{\phi(z,z')}({\S'}^n-{\S'}^{n-1})\d z' \text{ if $z<Y$} .
\end{align*}

Consequently,  ${\vS'}^n\ge {\vS'}^{n-1}$  implies  ${\S'}^n\ge {\S'}^{n-1}$ and hence ${\vI'}^{n+1}\ge {\vI'}^n$.  In turn, then, ${J'}_k^{n+1}\ge {J'}_k^n$ and ${K'}_k^{n+1}\ge {K'}_k^n$.

  Finally, the temperature equation implies 
 \eeqn{&
 \int_{\R_+}\kappa_a   B_\nu(T^{n+1})\d\nu &= \int_{\R_+}\kappa_a ({J'}_0^{n+1}+({K'}_0^{n+1})\d\nu 
\cr&&
\ge 
 \int_{\R_+}\kappa_a ({J'}^{n}_0+{K'}^{n}_0)\d\nu 
 =\int_{\R_+}\kappa_a   B_\nu(T'^{n})\d\nu.
}
which implies that  $T^{n+1}\ge T'^{n}$ because $T\mapsto B_\nu(T)$ is monotone increasing.

In summary, it shows that 
 \[
 T^n\ge T^{n-1},~ {  I}^n_{l,r}\ge {  I}_{i,r}^{n-1}~~ \Rightarrow ~~T^{n+1}\ge T^n,~ {  I}^{n+1}_{l,r}\ge {  I}_{i,r}^{n}.
 \]
  To start the iterations appropriately, simply set $T^0=0$, ${  I}^0_{l,r}=0$, then by the positivity of the coefficients ${  I}^1_{l,r}\ge 0$ and $T^1\ge 0$.

 The same argument works with  decreasing sequences,  
  \[
 T^n\le T^{n-1},~ {I}^n_{l,r}\le{I}_{i,r}^{n-1}~~ \Rightarrow ~~T^{n+1}\le T^n,~ {I}^{n+1}_{l,r}\le{I}_{i,r}^{n}.
 \]
Note that the decreasing  $\mu$ integrals of $\{\vI^n\}_n$ are bounded by zero. Consequently, convergence holds and a solution to the system $\{\vI',T\}$ exists.

But starting a decreasing sequence with $T^1\le T^0$, ${I}^1_{l,r}\le{I}^0_{l,r}$ is not so simple. In practice it seems that $T^{-1}=1,~I^{-1}_{l,r}=0$ works.

\begin{proposition}
If the solution $T^*,I^*_{l,r}$ exists,  then it can be reached numerically from above or below by iterations \eqref{lllr} and these are monotone increasing if initiated with $T^0=I^0_{l,r}=0$  and decreasing if initiated by $T^1\ge T^0$, and $ I^0_{l,r}$ such that  $I^0_{l,r}\ge I^1_{l,r}$  Furthermore,
if there is an initial guess for a decreasing sequence can be found, then convergence is implied   and a solution exists to the VRRTE system with Fresnel Conditions.
\end{proposition}

\section{Numerical Results}
The computer program is written in $C++$ and opensource. The absorption is  $\kappa=\rho(z)\overline\kappa(\nu)$ where $\rho$ is the density; $\overline\kappa_\nu$  is either constant and equal to 0.5 or taken from the Gemini experiment \footnote{
\url{www.gemini.edu/observing/telescopes-and-sites/sites\#Transmission}
}
as shown in figure \ref{kappafig}.  The presence of \texttt{CO}$_2$ in the atmosphere changes $\overline\kappa_\nu$ into  $\overline\kappa^1_\nu=\min\{(1.2,1.8\overline\kappa(\nu)$ in the range
\[
\nu\in[\frac3{18},\frac3{14}]\cup[\frac35,\frac32].
\]
This is shown also in figure \ref{kappafig}.

We investigated 2  cases:
\begin{itemize}
\item
Case 1: Visible light coming from the Sun through the top of the troposphere at $z=Z=10km$ and escaping freely at $z=0$, i.e.
\[
I(0,\mu)=0, \quad I(Z,-\mu)=c_S B_\nu(T_S)\mu, \mu>0.
\]
\item
Case 2: Infrared light coming from Earth and escaping freely at $z=Z$:  
\[
I(0,\mu)=c_E B_\nu(T_E)\mu,~I(Z,-\mu)=0, \mu>0.
\]
In both cases there is a change of refractive index at $z=Y=Z/2$.
According to \cite{measureN} the variation of the refractive index in the atmosphere due to clouds is quite small $\sim 0.003$. To enhance the effect we use 3 times this value.  
\item
Case 3: We also computed an atmosphere above an ocean 1000m deep with a change of index from 1 in water to 0.7 in air  and a change of density from 10 in water to 1/10 in air. In reality it should be 1/100, but then the exponentials should be evaluated differently. Infrared is coming from $z=0$ as in Case 1 or sunlight is coming from $Z$ as in Case 1.
\end{itemize}
For all tests the following is used:
\begin{itemize}
\item $n(z)= 1+\epsilon 
\One_{z>Y}$,
\item $a_s=a_1\One_{z\in(z_1,z_2)} + a_2\One_{z>z_2}\One_{\nu\in(\nu_1,\nu_2)}\left(\frac\nu{\nu_2}\right)^4$,
\item $\epsilon=-0.01$ or $-0.3$, $a_1=0.7,~a_2=0.3$, $z_1=0.4,~z_2=0.8$, $\nu_1=0.6,~\nu_2=1.5$.
\item $c_S = 2\cdot 10^{-5}, T_S = 5700/4798, c_E = 2.5, T_E =300/4798$. 
\end{itemize}

Functions $\int_0^1\phi(z,z',\mu)\mu^k\d\mu,~k=0,2$, $\vZ$ and ${\bm\alpha}$ are tabulated for 60 values of $\overline\kappa\in(0.01,1.2)$ to speed up the runtime which is around 10 seconds  on a high end Apple Macbook when $(0,Z)$ is discretized with 60 points and $(0,1)$ with 100 intervals.

The monotony of the iterative process is displayed in figure \ref{convergefig}.  It is clear that by starting below (resp. above) the solution - here the values of the temperature at $z=300$m - are increasing (resp. decreasing). Note that 15 iterations are sufficient to obtain a 3-digits precision.

\subsection{Part I: Comparison of the temperature jumps with and without Fresnel Conditions}

To study the effect of $n$ on a simple case we ran the program with $\kappa=0.5$, $n$ as above, with $\epsilon=0.01$ and the data of Case 2. The temperatures are shown in figure \ref{tempeK1} in red. The computations are done with and without Fresnel conditions at $z=Y$.

A similar computation is done for Case 1 with $\kappa=0.5$, $\epsilon=-0.3$ with and without Fresnel conditions.  The temperatures are displayed in blue in  figure \ref{tempeK1}.  The average ligh and polarization intensities are shown in figure \ref{lightI1}.
As expected the temperature jumps with and without Fresnel Conditions are different.

\subsection{Part II: Effect of \texttt{CO}$_2$ when $n$ has a Discontinuity}

Case 2 was run with $\epsilon=-0.3$ with $\kappa_\nu$ read from the Gemini website and shown in figure \ref{kappafig}.  Then this $\kappa_\nu$ was increased to $\kappa^1_\nu$ in the frequency range where \texttt{CO}$_2$ is absorbent, shown in red in figure \ref{kappafig}.
The corresponding temperatures and average light intensities are shown in figures \ref{tempeK2} and \ref{lightI2}.
\\\\
The main points are
\begin{itemize}
\item
For Case 1 (Visible light crossing the atmosphere downward and passing through a refractive medium for $z<Y$) Fresnel's conditions have a drastic effect on the results.
\item
For Case 2 (IR light coming from Earth and the refractive index decreases when $z>Y$) the refraction makes the medium much more absorbing. Furthermore, with a Fresnel interface, an increase in  \texttt{CO}$_2$ decreases the temperature at high altitude but increases it near the ground.
\end{itemize}

\subsection{Part III: Water and Air: Influence of the Density}
The geometry imitates an ocean of depth 1000m with an atmosphere 9000m thick. The density of water is a thousand time greater than air.  To account for this we took $\rho=10$ in water and $\rho=0.1$ in air; smaller values are problematic for the integrals.  The refractive index is as above, n=1 in water and n=0.7 in air.
Four runs were done similar to { Case 1} and { Case 2} with a Gemini $\overline{\kappa}_\nu$ and $\overline{\kappa}^1_\nu$.

Results are shown in figure \ref{tempeK3} and \ref{lightI3}. Density has a drastic effect, naturally. Notice that \texttt{CO}$_2$ increases the temperature in water and cools the atmosphere.
 
\begin{figure}
\begin{minipage} [b]{0.45\textwidth}
\begin{center}
\begin{tikzpicture}[scale=0.56]
\begin{axis}[width=13cm,height=7cm,legend style={at={(1,1)},anchor=north east}, compat=1.3,
   xmin=0,xmax=25, ymin=0, ymax=2.4,
   xlabel= {Wavelength ($\mu$m)},
  ylabel= {Absorption coefficient  $\overline\kappa$}
  ]
\addplot[thick,dashed,color=red,mark=none, mark size=1pt] table [x index=0, y index=1]{fig/kappa1.txt};
\addlegendentry{ $\overline\kappa^1$}
\addplot[thick,solid,color=black,mark=none, mark=x] table [x index=0, y index=1]{fig/kappa.txt};
\addlegendentry{ $\overline\kappa$}
\end{axis}
\end{tikzpicture}
\caption{ \footnotesize\label{kappafig} Absorption $\overline\kappa$ from the Gemini experiment, versus wavenumber ($3/\nu$). In dotted lines, the modification to construct $\overline\kappa_1$ to account for the opacity of \texttt{CO}$_2$.}
\end{center}
\end{minipage}
\hskip1.1cm
\begin{minipage} [b]{0.45\textwidth}. 
\begin{center}
\begin{tikzpicture}[scale=0.56]
\begin{axis}[legend style={at={(0.99,0.9)},anchor= east}, 
   xlabel= {Iterations}
   ylabel = {$T_-^n|_{z=500m}$ in $C^o$}
  ]
\addplot[thick,solid,color=blue,mark=*, mark size=1pt] table [x index=0, y index=1]{fig/approxFgrowth.txt};
\addlegendentry{Increasing seq.}
\addplot[thick,solid,color=red,mark=*, mark size=1pt] table [x index=0, y index=1]{fig/approxFdecrease.txt};
\addlegendentry{Decreasing seq.}
\end{axis}
\end{tikzpicture}
\caption{ \footnotesize \label{convergefig} Convergence of the temperature at altitude 300m during the iterations. In solid line when it is started with $T^0=0$, in dashed line when the initial temperature is 180°C. Notice the monotonicity of both curves.
}\end{center}
\end{minipage}
\end{figure}

\begin{figure}[htbp]
\begin{minipage} [b]{0.45\textwidth}
\begin{center}
\begin{tikzpicture}[scale=0.7]
\begin{axis}[legend style={at={(0.97,0.9)},anchor= east}, compat=1.3,
   xmin=0.01, xmax=1,
   ymax=20,
   xlabel= {Altitude 10km},
  ylabel= {Temperature $^o$C}
  ]
\addplot[thick,solid,color=blue,mark=none, mark size=1pt] table [x index=0, y index=1]{fig/withF001/temperature21.txt};
\addlegendentry{with F, $\epsilon=-0.01$, up}
\addplot[thick,dashed,color=blue,mark=none, mark size=1pt] table [x index=0, y index=1]{fig/noF001/temperature21.txt};
\addlegendentry{no F, $\epsilon=-0.01$, up }
\addplot[thick,solid,color=red,mark=none, mark size=1pt] table [x index=0, y index=1]{fig/withF03/temperature121.txt};
\addlegendentry{with F, $\epsilon=-0.3$, down}
\addplot[thick,dashed,color=red,mark=none, mark size=1pt] table [x index=0, y index=1]{fig/noF03/temperature121.txt};
\addlegendentry{no F, $\epsilon=-0.3$, down }

\end{axis}
\end{tikzpicture}
\caption{ \footnotesize \label{tempeK1}  Temperatures versus altitude  with $\overline\kappa=0.5$. In blue with $\epsilon=-0.01$ for {\bf Case 2} and in red with $\epsilon=-0.3$ for {\bf Case 1}. The solid curves are computed with Fresnel's conditions at $z=Y=0.5$ and the dashed curves are computed without them.}
\end{center}
\end{minipage}
\hskip0.5cm
\begin{minipage} [b]{0.45\textwidth}. 
\begin{center}
\begin{tikzpicture}[scale=0.7]
\begin{axis}[legend style={at={(-0.1,1.05)},anchor= west}, compat=1.3,
  ymax=25,
   ylabel= {light-intensity},
  xlabel= {wave length $\mu$m}
  ]
\addplot[thick,dashed,color=blue,mark=none, mark size=1pt] table [x index=0, y index=1]{fig/noF001/imean21Z.txt};
\addlegendentry{$10^5J_0(Z), \epsilon=-0.01$ no F}
\addplot[thick,solid,color=blue,mark=none, mark size=1pt] table [x index=0, y index=1]{fig/withF001/imean21Z.txt};
\addlegendentry{$10^7  J_0(Z), \epsilon=-0.01$ with F}
\addplot[thick,dashed,color=red,mark=none, mark size=1pt] table [x index=0, y index=1]{fig/noF03/imean1210.txt};
\addlegendentry{$10^5J_0(0), \epsilon=-0.3$ no F}
\addplot[thick,solid,color=red,mark=none, mark size=1pt] table [x index=0, y index=1]{fig/withF03/imean1210.txt};
\addlegendentry{$10^7J_0(0), \epsilon=-0.3$ with F}
\addplot[thick,solid,color=brown,mark=none, mark size=1pt] table [x index=0, y index=2]{fig/withF03/imean1210.txt};
\addlegendentry{$10^7K_0(0), \epsilon=-0.3$ with F}

\addplot[thick,dashed,color=brown,mark=none, mark size=1pt] table [x index=0, y index=2]{fig/noF03/imean1210.txt};
\addlegendentry{$10^7K_0(0), \epsilon=-0.3$ no F}
\end{axis}
\end{tikzpicture}
\caption{ \footnotesize \label{lightI1} Total light intensity $J_0$  and polarization $K_0$ versus wave length at $z=0$ or  $z=Z$ for the computations of figure \ref{tempeK1}.}
\end{center}
\end{minipage}
\end{figure}


\begin{figure}[htbp]
\begin{minipage} [b]{0.45\textwidth}
\begin{center}
\begin{tikzpicture}[scale=0.7]
\begin{axis}[legend style={at={(0.97,0.9)},anchor= east}, compat=1.3,
   xmin=0.01, xmax=1,
   ymax=0,
   xlabel= {Altitude 10km},
  ylabel= {Temperature $^o$C}
  ]
\addplot[thick,solid,color=blue,mark=none, mark size=1pt] table [x index=0, y index=1]{fig/CO2/temperature1.txt};
\addlegendentry{Gemini}
\addplot[thick,dashed,color=blue,mark=none, mark size=1pt] table [x index=0, y index=1]{fig/CO2/temperature11.txt};
\addlegendentry{Gemini with CO2}
\addplot[thick,solid,color=red,mark=none, mark size=1pt] table [x index=0, y index=1]{fig/CO3/temperature1.txt};
\addlegendentry{Gemini no F}
\addplot[thick,dashed,color=red,mark=none, mark size=1pt] table [x index=0, y index=1]{fig/CO3/temperature11.txt};
\addlegendentry{Gemini with CO2 no F}
\end{axis}
\end{tikzpicture}
\caption{ \footnotesize \label{tempeK2} {\bf Case 2}. Effect of CO2 on the temperature in the presence of a Fresnel interface with $\epsilon=-0.3$.}
\end{center}
\end{minipage}
\hskip0.5cm
\begin{minipage} [b]{0.45\textwidth}. 
\begin{center}
\begin{tikzpicture}[scale=0.7]
\begin{axis}[legend style={at={(-0.2,0.8)},anchor= west}, compat=1.3,
 ymax=80,
   ylabel= {light-intensity},
  xlabel= {wave length $\mu$m}
  ]
\addplot[thick,solid,color=blue,mark=none, mark size=1pt] table [x index=0, y index=1]{fig/CO2/imean1Z.txt};
\addlegendentry{$10^5J_0(Z)$, Gemini}
\addplot[thick,solid,color=brown,mark=none, mark size=1pt] table [x index=0, y index=2]{fig/CO2/imean1Z.txt};
\addlegendentry{$10^7K_0(Z)$, Gemini}
\addplot[thick,dashed,color=blue,mark=none, mark size=1pt] table [x index=0, y index=1]{fig/CO2/imean11Z.txt};
\addlegendentry{$10^5J_0(Z)$, Gemini+CO2}
\addplot[thick,dashed,color=brown,mark=none, mark size=1pt] table [x index=0, y index=2]{fig/CO2/imean11Z.txt};
\addlegendentry{$10^7K_0(Z)$, Gemini+CO2}
\end{axis}
\end{tikzpicture}
\caption{ \footnotesize \label{lightI2} {\bf Case 2}. Total light intensity $J_0$  and polarized $K_0$ versus wave length at altitude Z=10km. }
\end{center}
\end{minipage}
\end{figure}
\begin{figure}[htbp]
\begin{minipage} [b]{0.45\textwidth}
\begin{center}
\begin{tikzpicture}[scale=0.7]
\begin{axis}[legend style={at={(0.97,0.9)},anchor= east}, compat=1.3,
   xmin=0.01, xmax=0.5,
   ymax= 50,ymin=-50,
   xlabel= {Altitude 10km},
  ylabel= {Temperature $^o$C}
  ]
\addplot[thick,solid,color=blue,mark=none, mark size=1pt] table [x index=0, y index=1]{fig/CO4/temperature1.txt};
\addlegendentry{Gemini Up}
\addplot[thick,dashed,color=blue,mark=none, mark size=1pt] table [x index=0, y index=1]{fig/CO4/temperature11.txt};
\addlegendentry{Gemini Up with CO2}
\addplot[thick,solid,color=red,mark=none, mark size=1pt] table [x index=0, y index=1]{fig/CO5/temperature101.txt};
\addlegendentry{Gemini Down}
\addplot[thick,dashed,color=red,mark=none, mark size=1pt] table [x index=0, y index=1]{fig/CO5/temperature111.txt};
\addlegendentry{Gemini Down with CO2}
\end{axis}
\end{tikzpicture}
\caption{ \footnotesize \label{tempeK3} {\bf Case 3} with infrared coming from $z=0$(red) and (blue) with Sun rays coming from $Z$. Dotted curves display the effect of CO2 on the temperature in the presence of a Fresnel interface with $\epsilon=-0.3$.(Results are not shown for $z>0.5$) because the temperature hardly changes.}
\end{center}
\end{minipage}
\hskip0.5cm
\begin{minipage} [b]{0.45\textwidth}. 
\begin{center}
\begin{tikzpicture}[scale=0.7]
\begin{axis}[legend style={at={(0.6,1.5)},anchor= north}, compat=1.3,
  xmax=30,
   ylabel= {light-intensity},
  xlabel= {wave length $\mu$m}
  ]
\addplot[thick,solid,color=blue,mark=none, mark size=1pt] table [x index=0, y index=1]{fig/CO4/imean1Z.txt};
\addlegendentry{$10^5J_0(Z)$, Gemini}
\addplot[thick,solid,color=brown,mark=none, mark size=1pt] table [x index=0, y index=2]{fig/CO4/imean1Z.txt};
\addlegendentry{$10^7K_0(Z)$, Gemini}
\addplot[thick,dashed,color=blue,mark=none, mark size=1pt] table [x index=0, y index=1]{fig/CO4/imean11Z.txt};
\addlegendentry{$10^5J_0(Z)$, Gemini+CO2}
\addplot[thick,dashed,color=brown,mark=none, mark size=1pt] table [x index=0, y index=2]{fig/CO4/imean11Z.txt};
\addlegendentry{$10^7K_0(Z)$, Gemini+CO2}
\end{axis}
\end{tikzpicture}
\caption{ \footnotesize \label{lightI3} {\bf Case 3}. Total light intensity $J_0$  and polarized $K_0$ versus wave length at altitude Z. }
\end{center}
\end{minipage}
\end{figure}

\subsection{Computation of $I$ and $Q$ versus $z$ and $\mu$}

For each $\nu$ at which  $I$ and $Q$ are desired we use \eqref{lqn} with \eqref{SS}. The computation is fast but some of the integrals are singular, so it needs to be implemented with care. The absorption is constant$\overline\kappa_\nu=0.5$, the density is $1-0.75 z$. Uniform scattering is applied with $a_s=0.7$. The results are displayed at $\nu=0.1435$.

 {\bf Case 2} is computed, first
with $n(z)=1-0.3\One_{z>Y}$ but no Fresnel conditions  added.  Results are in figures \ref{Izmu2}, \ref{Kzmu2}. The jumps are mostly due to the multiplication by $n^2(z)$ in $I=n^2(z)\tilde I$; $\tilde I$ has a much smaller jump at $z=Y$.

Then, the same computation is done with Fresnel Conditions added. Results are in figures \ref{Izmu3}, \ref{Kzmu3}. Notice that the jump is bigger.

%

\begin{figure}[htbp]
\begin{minipage} [b]{0.47\textwidth}
\centering
\begin{tikzpicture}[scale=0.8]
\begin{axis}[ legend style={at={(1,1)},anchor=north east}, compat=1.3,xlabel= {$z$},ylabel= {$\mu$}]
 \addplot3[surf,fill opacity=0.75] table [ ] {fig/nonewithN/gnuplotI.txt};
\addlegendentry{ $10^5\cdot I(z,\mu)$}
\end{axis}
\end{tikzpicture}
\caption{Case 2, $\overline\kappa_\nu=0.5$: Light intensity when $n(z)=1-0.3\One_{z>Y}$.  No Fresnel condition added.}
\label{Izmu2} 
\end{minipage}
\hskip 0.25cm
\begin{minipage} [b]{0.47\textwidth}
\centering
\begin{tikzpicture}[scale=0.8]
\begin{axis}[legend style={at={(1,1)},anchor=north east}, compat=1.3,xlabel= {$z$},ylabel= {$\mu$}]
 \addplot3[surf,fill opacity=0.5] table [ ] {fig/nonewithN/gnuplotK.txt};
\addlegendentry{  $10^5\cdot Q(z,\mu)$}
\end{axis}
\end{tikzpicture}
\caption{Case 2, $\overline\kappa_\nu=0.5$: Polarization when $n(z)=1-0.3\One_{z>Y}$. No Fresnel condition added.}
\label{Kzmu2}
\end{minipage}%
\end{figure}

\begin{figure}[htbp]
\begin{minipage} [b]{0.47\textwidth}
\centering
\begin{tikzpicture}[scale=0.8]
\begin{axis}[ legend style={at={(1,1)},anchor=north east}, compat=1.3,xlabel= {$z$},ylabel= {$\mu$}]
 \addplot3[surf,fill opacity=0.75] table [ ] {fig/nonewithF/gnuplotI.txt};
\addlegendentry{ $10^5\cdot I(z,\mu)$}
\end{axis}
\end{tikzpicture}
\caption{Case 2: Light intensity when $n(z)=1-0.3\One_{z>Y}$, uniform scattering with $a_s=0.7$.  With Fresnel conditions at $z=Y$.}
\label{Izmu3} 
\end{minipage}
\hskip 0.25cm
\begin{minipage} [b]{0.47\textwidth}
\centering
\begin{tikzpicture}[scale=0.8]
\begin{axis}[legend style={at={(1,1)},anchor=north east}, compat=1.3,xlabel= {$z$},ylabel= {$\mu$}]
 \addplot3[surf,fill opacity=0.5] table [ ] {fig/nonewithF/gnuplotK.txt};
\addlegendentry{  $10^5\cdot Q(z,\mu)$}
\end{axis}
\end{tikzpicture}
\caption{Polarization when $n(z)=1-0.3\One_{z>Y}$, uniform scattering with $a_s=0.7$.  With Fresnel conditions at $z=Y$.}
\label{Kzmu3}
\end{minipage}%
\end{figure}
%

\section{Precision}
The iterations converge rather fast and the solution can be bounded from above and below by the decreasing and increasing sequences.  The Newton iterations to compute the temperature from the knowledge of $I$ can also be driven to machine precision with a small number of iterations because $T\mapsto B_\nu(T)$ is strictly increasing.
The bottleneck is the precision to compute integrals such as
\[\ds
J_k(z,z') = \int_{[0,1]\cap M}\mu^{k-1}\frac{\exp\left( -\kappa_\nu\int_z^{z'}\kappa(y)\left(1-(1-\mu^2)\frac{n^2(z)}{n^2(y)}\right)_+^{-\frac12}dy\right)}{\left(1-(1-\mu^2)\frac{n^2(z)}{n^2(z')}\right)^{\frac12}}\d\mu
\]
with $M=\{\mu~:~1-(1-\mu^2)\frac{n^2(z)}{n^2(z')}>0\}$.  There is a singularity at $\mu=\sqrt{1-\frac{n^2(z')}{n^2(z)}}$ if non-negative.

We use a quadrature formula at $\mu^j=(j\delta\mu)^2$ if $\mu^j<\mu^*$ and $\mu^j=j\delta\mu$ if $\mu^j>\mu^*$.

When $n$ is constant, $J_k(0,z')$ is the exponential integral $E_k(\kappa_\nu\int_0^{z'}\kappa(y)d y)$ for which there is a very precise approximated formula when $\kappa_\nu\kappa(y)$ is not large \cite{ABR}.
With $\kappa_\nu=0.5$ and $\kappa(z)=1-z/2$, figure \ref{convergeExp} displays the precision obtained on $E_1,E_3,E_5$ when $\delta\mu=0.02,0.01,0.005$ and $\mu^*=0.1$. Already with $\mu=0.01$ the relative precision is less than $1\%$.The integral in the exponential is computed with a fixed increment $\delta z=1/60$.

When $n$ is not constant we can only observe the convergence towards the value obtained with a very small $\delta\mu$ and $\delta z$, as shown in figure \ref{convergeExp2}. The convergence is not monotone in $\delta\mu$, so it is hard to say if the parameters are small enough.
\begin{figure}[htbp]
\begin{minipage} [b]{0.45\textwidth}. 
\begin{center}
\begin{tikzpicture}[scale=0.7]
\begin{axis}[legend style={at={(0.99,0.9)},anchor= east}, 
   ymin=0., ymax=4,
   xlabel= {Altitude},
   ylabel = {Relative pointwise error in \%.}
  ]
\addplot[thick,solid,color=blue,mark=*, mark size=1pt] table [x index=0, y index=1]{fig/test.txt};
\addlegendentry{$100|E_1(z)-E_{1h}(z)|/E_1(z)$.}
\addplot[thick,solid,color=red,mark=*, mark size=1pt] table [x index=0, y index=2]{fig/test.txt};
\addlegendentry{$100|E_3(z)-E_{3h}(z)|/E_3(z)$.}
\addplot[thick,solid,color=green,mark=*, mark size=1pt] table [x index=0, y index=3]{fig/test.txt};
\addlegendentry{$100|E_5(z)-E_{5h}(z)|/E_5(z)$.}
\end{axis}
\end{tikzpicture}
\caption{ \footnotesize \label{convergeExp} Convergence of the approximate exponential integrals $E_{1h},E_{3h},E_{5h}$ to $E_{1},E_{3},E_{5}$ versus $\d\mu=0.02,0.01,0.05$. The pointwise relative errors are plotted versus $z$. At $\d\mu=0.01$ the 3 relative errors are below 1\%. $\d\mu=0.005$ does not improve the precision.
}\end{center}
\end{minipage}
\hskip1.5cm
\begin{minipage} [b]{0.45\textwidth}. 
\begin{center}
\begin{tikzpicture}[scale=0.7]
\begin{axis}[legend style={at={(0.99,0.9)},anchor= east}, 
   ymin=0., ymax=0.003,
   xlabel= {Altitude},
   ylabel = {Absolute pointwise error in \%.}
  ]
%
\addplot[thick,solid,color=red,mark=*, mark size=1pt] table [x index=0, y index=2]{fig/ntest.txt};
\addlegendentry{$|E_{1\frac1{100}}(z)-E_{1\frac1{800}}(z)|$}
\addplot[thick,solid,color=green,mark=*, mark size=1pt] table [x index=0, y index=3]{fig/ntest.txt};
\addlegendentry{$|E_{1\frac1{200}}(z)-E_{1\frac1{800}}(z)|$}
\addplot[thick,solid,color=blue,mark=*, mark size=1pt] table [x index=0, y index=4]{fig/ntest.txt};
\addlegendentry{$|E_{1\frac1{400}}(z)-E_{1\frac1{800}}(z)|$}
%
%
\addplot[thick,solid,color=pink,mark=*, mark size=1pt] table [x index=0, y index=2]{fig/ntest5.txt};
\addlegendentry{$|E_{5\frac1{100}}(z)-E_{5\frac1{800}}(z)|$}
\addplot[thick,solid,color=magenta,mark=*, mark size=1pt] table [x index=0, y index=3]{fig/ntest5.txt};
\addlegendentry{$|E_{5\frac1{200}}(z)-E_{5\frac1{800}}(z)|$}
\addplot[thick,solid,color=black,mark=*, mark size=1pt] table [x index=0, y index=4]{fig/ntest5.txt};
\addlegendentry{$|E_{5\frac1{400}}(z)-E_{5\frac1{800}}(z)|$}

\end{axis}
\end{tikzpicture}
\caption{ \footnotesize \label{convergeExp2} Convergence of the approximate exponential integrals $E_{1h}$,$E_{5h}$ versus $\d\mu=0.01,0.005,0.0025$ when $n(z)=1\pm 0.3\One_{z>0.5}$. The pointwise absolute errors are plotted versus $z$. In the case of $E_{1h}$ both curves corresponding to the 2 signs in $n$ are plotted. }
\end{center}
\end{minipage}
\end{figure}

\section{Conclusion}
In this article the methodology developed in \cite{FGOP3} for the numerical solution of the VRTE has been extended to include the Fresnel Conditions at an interface of discontinuity of the refractive index. While the solution of the equation \eqref{fundamental} given by Pomraning and Chandrasekhar in \cite{CHA} \cite{POM} do give a jump of the Stokes vector and the temperature at the discontinuity, the amplitude of the jump is not the same as the one given by the Fresnel Conditions.

In principle the method is not hard to program (500 lines of \texttt{C++}) and the execution time is a few seconds; however, the formulas are complex and the probability of having bugs cannot be ruled out. Yet this is a very fast method to solve the VRRTE in all generalities for the coefficients.

As before we have measured numerically the effect of a change on the  absorption due to \texttt{CO}$_2$. Although  preliminary, the  conclusion reached in our earlier studies are also valid here when there is a change of refraction index as in water and air: the effect of \texttt{CO}$_2$ on the infrared radiation from Earth heats up the region near the ground and cools it in high altitude. 

Generalization to 3D as in \cite{JCP}) and \cite{JCP2} for a non-stratified atmosphere is possible but the complexity of handling curved surfaces of refractions is high, as for geometrical optics.

\bibliographystyle{plain}

\bibliography{references}

\end{document}